\numberwithin{equation}{section}
\numberwithin{figure}{section}
\theoremstyle{plain}
\newtheorem{thm}{\protect\theoremname}[section]
\theoremstyle{definition}
\newtheorem{defn}[thm]{\protect\definitionname}
\theoremstyle{remark}
\newtheorem{rem}[thm]{\protect\remarkname}
\theoremstyle{plain}
\newtheorem{lem}[thm]{\protect\lemmaname}
\newtheorem{cor}[thm]{\protect\corollaryname}
\theoremstyle{definition}
\newtheorem{example}[thm]{\protect\examplename}
\providecommand{\corollaryname}{Corollary}
\providecommand{\definitionname}{Definition}
\providecommand{\examplename}{Example}
\providecommand{\lemmaname}{Lemma}
\providecommand{\remarkname}{Remark}
\providecommand{\theoremname}{Theorem}
\begin{document}
\subjclass[2020]{Primary 42C15. Secondary 47B80, 47H05, 47H09, 47H40.}
\title{Random Nonlinear Fusion Frames from Averaged Operator Iterations}
\begin{abstract}
We study random iterations of averaged operators in Hilbert spaces
and prove that the associated residuals converge exponentially fast,
both in expectation and almost surely. Our results provide quantitative
bounds in terms of a single geometric parameter, giving sharp control
of convergence rates under minimal assumptions. As an application,
we introduce the concept of random nonlinear fusion frames. Here the
atoms are generated dynamically from the residuals of the iteration
and yield exact synthesis with frame-like stability in expectation.
We show that these frames achieve exponential sampling complexity
and encompass important special cases such as random projections and
randomized Kaczmarz methods. This reveals a link between stochastic
operator theory, frame theory, and randomized algorithms, and establishes
a structural tool for constructing nonlinear frame-like systems with
strong stability and convergence guarantees.
\end{abstract}

\author{James Tian}
\address{Mathematical Reviews, 535 W. William St, Suite 210, Ann Arbor, MI
48103, USA}
\email{jft@ams.org}
\keywords{Random averaged operators, firmly nonexpansive mappings, almost-sure
convergence, mean-square bounds, fusion frames, nonlinear frames,
randomized Kaczmarz, sampling complexity}
\maketitle

\section{Introduction}

Randomized iterative methods play a central role in modern analysis,
signal processing, and optimization. Classical examples include the
randomized Kaczmarz algorithm for solving linear systems, stochastic
gradient methods in optimization, and randomized projection methods
in frame theory. A common feature of these approaches is that convergence
guarantees are often stated in expectation, while almost sure quantitative
convergence rates remain more difficult to establish.

In this work we develop a general convergence theory for random iterations
of averaged operators on Hilbert spaces. Averaged operators provide
a unifying framework that includes firmly nonexpansive mappings, orthogonal
projections, resolvents of monotone operators, and their convex combinations.
They are central in convex optimization and monotone operator theory,
as surveyed in \cite{MR3616647}. Our focus is on random sequences
of such operators satisfying a mild independence and coercivity condition.
We prove that the residuals of the associated iteration decay exponentially
fast in expectation and, more strongly, almost surely with exponential
rate.

Here we present a pair of convergence theorems (Theorems \ref{thm:main}
and \ref{thm:b10}) giving sharp quantitative bounds for random averaged
iterations. The results are expressed in terms of a single geometric
parameter that measures the average coercivity of the random operators.
This parameter controls both the stability constants and the exponential
convergence rates, leading to clean and explicit estimates. 

As applications, we introduce the notion of random nonlinear fusion
frames. In contrast to classical frame systems, where the atoms are
fixed and linear, here the atoms are generated nonlinearly and randomly
from the residuals of the iteration. We show that these random nonlinear
fusion frames provide exact synthesis, frame-like stability inequalities
in expectation, and exponential sampling complexity. As special cases,
we recover random projection systems and randomized Kaczmarz methods,
thereby connecting our abstract theory with established algorithms
in harmonic analysis and numerical linear algebra.

Taken together, these results show that random averaged iterations
are not only analytically tractable, but also give rise to new frame-like
structures of independent mathematical interest. They establish a
link between stochastic operator theory, frame theory, and randomized
algorithms, leading to further developments in both analysis and applications.

\subsection{Related Work}

\textit{Averaged operators, resolvents, and proximal methods}. Averaged
(and firmly nonexpansive) operators provide a unifying language for
projections, resolvents of monotone operators, and many splitting
schemes; see \cite{MR3616647} for a comprehensive account and calculus
rules, and \cite{MR3299649} for sharp stability of compositions and
convex combinations of averaged maps. The proximal point algorithm
goes back to Rockafellar \cite{MR410483}, while operator-splitting
foundations include Lions-Mercier the Douglas-Rachford ADMM connections
analyzed by Eckstein-Bertsekas; see also the signal-processing survey
of Combettes-Pesquet \cite{MR551319,MR2858838,MR1168183}. In contrast,
our Theorems \ref{thm:main} and \ref{thm:b10} deliver explicit exponential
decay for random $\alpha$-averaged iterations under a mild coercivity-in-expectation
condition, together with a.s. exponential truncation, and we express
the constants via a single geometric parameter that simultaneously
controls stability and rates.

\textit{Randomized Kaczmarz and subspace actions}. The randomized
Kaczmarz method achieves expected exponential convergence for linear
systems, with sharp dimension and conditioning dependent bounds, and
has noisy-data variants with bias-variance tradeoffs \cite{MR2500924,MR2640019}.
Randomized projections for linear feasibility and extended Kaczmarz
least-squares variants further broaden this picture \cite{MR2724068,MR3069089}.
Randomized subspace actions connect these ideas to fusion-frame recovery
\cite{MR3439235}. 

Our results strictly generalizes these settings: projections and hyperplane
reflections are special cases of $\alpha$-averaged maps, and our
results yield a.s. exponential truncation and frame-like energy inequalities
in expectation for nonlinear, residual-driven atoms, which go beyond
linear static atoms in classical Kaczmarz/fusion-frame analyses.

\textit{Fusion frames and distributed representations}. Fusion frames
(frames of subspaces) were developed to model distributed sensing/processing,
with robust reconstruction from subspace projections and many design
tools \cite{MR2419707,MR2964018}. Operator-valued generalizations
include $g$-frames and frames of operators \cite{MR2239250,doi:10.1142/S0219691308002379}.
Prior work largely assumes linear, fixed atoms (subspace projections);
by contrast, our application shows that random nonlinear fusion frames
arise naturally from random averaged iterations: the atoms depend
nonlinearly on the evolving residual yet still provide exact synthesis
(a.s.), frame-like stability in expectation, and exponential sampling
complexity. This appears to be new within the fusion-frame literature. 

\textit{Stochastic coordinate and operator-splitting methods}. Randomized
block-coordinate and related stochastic splitting methods deliver
linear (geometric) rates under strong convexity/contractivity assumptions,
typically in expectation or with high probability bounds \cite{MR3179953}.
Our results differ in two ways: (i) we work at the level of random
averaged operators (resolvents, projections, averaged maps) with an
explicit operator-theoretic coercivity-in-mean parameter, and (ii)
we obtain almost-sure exponential truncation for the residual and
a synthesis interpretation that yields new frame-like objects.

The paper is organized as follows. \prettyref{sec:2} contains the
main assumptions and statements of the convergence theorems. \prettyref{sec:3}
discusses examples including random projections and hyperplane reflections.
\prettyref{sec:4} develops the application to random nonlinear fusion
frames.

\section{Random expansions via firmly nonexpansive operators}\label{sec:2}

Throughout this section, $H$ denotes a complex separable Hilbert
space with inner product linear in the second argument and conjugate-linear
in the first. Unless otherwise stated, mappings need not be linear.
We use standard tools such as Markov's inequality and the Borel-Cantelli
lemmas; see, e.g., \cite{MR3930614} or \cite{MR4226142}.

The following definitions are standard (see, e.g., \cite{MR1074005,MR3616647}).
\begin{defn}
A mapping $T\colon H\rightarrow H$ is:
\begin{enumerate}
\item nonexpansive if $\left\Vert Tx-Ty\right\Vert \leq\left\Vert x-y\right\Vert $
for all $x,y\in H$.
\item firmly nonexpansive (FNE) if 
\begin{equation}
\left\Vert Tx-Ty\right\Vert ^{2}\leq\left\langle Tx-Ty,x-y\right\rangle ,\quad x,y\in H.\label{eq:c1}
\end{equation}
\item $\alpha$-averaged $\left(0<\alpha\leq1\right)$ if there exists a
nonexpansive mapping $N\colon H\rightarrow H$ with 
\[
T=\left(1-\alpha\right)I+\alpha N.
\]
 
\end{enumerate}
\end{defn}

\begin{rem}
Every FNE map is nonexpansive. Moreover, if $T$ is FNE then $I-T$
is also FNE. Orthogonal projections are FNE; so are resolvents $(I+A)^{-1}$
of maximally monotone operators $A$. In the linear case, the FNE
condition coincides with $0\leq T\leq I$, i.e., $T$ is a positive
contraction. Here we do not require linearity.
\end{rem}

The following inequality is a direct consequence of the characterization
of $\alpha$-averaged operators in \cite[Prop. 4.35]{MR3616647}.
Since we require this one-point formulation in the arguments below,
and for the sake of completeness, we include a short self-contained
proof.
\begin{lem}
\label{lem:3-1}Let $H$ be a Hilbert space, let $N\colon H\to H$
be nonexpansive, and fix $\alpha\in(0,1]$. Define $T=(1-\alpha)I+\alpha N$
with $N\left(0\right)=0$. Then, for all $x\in H$,
\[
\left\Vert Tx\right\Vert ^{2}+\frac{1-\alpha}{\alpha}\left\Vert x-Tx\right\Vert ^{2}\leq\left\Vert x\right\Vert ^{2}.
\]
\end{lem}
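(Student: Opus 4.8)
The plan is to exploit the convex-combination structure of $T=(1-\alpha)I+\alpha N$ directly, reducing the whole statement to one elementary Hilbert-space identity together with the nonexpansiveness of $N$. First I would record the two consequences of the hypotheses that do all the work. Since $N$ is nonexpansive with $N(0)=0$, setting $y=0$ in the defining inequality gives $\|Nx\|=\|Nx-N(0)\|\le\|x-0\|=\|x\|$ for every $x\in H$. Next, a direct computation of the residual yields
\[
x-Tx=x-(1-\alpha)x-\alpha Nx=\alpha\,(x-Nx),
\]
so that $\|x-Tx\|^{2}=\alpha^{2}\|x-Nx\|^{2}$, and therefore $\tfrac{1-\alpha}{\alpha}\|x-Tx\|^{2}=\alpha(1-\alpha)\|x-Nx\|^{2}$.

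The core step is to expand $\|Tx\|^{2}$ via the standard convex-combination identity, valid for real weights in any complex inner-product space:
\[
\|(1-\alpha)x+\alpha Nx\|^{2}=(1-\alpha)\|x\|^{2}+\alpha\|Nx\|^{2}-\alpha(1-\alpha)\|x-Nx\|^{2}.
\]
I would verify this by expanding both sides and matching the real-part cross terms; because the weight $\alpha$ is real, no conjugation issue arises in the complex setting. Adding the residual contribution computed above, the two occurrences of $\alpha(1-\alpha)\|x-Nx\|^{2}$ cancel exactly, leaving
\[
\|Tx\|^{2}+\tfrac{1-\alpha}{\alpha}\|x-Tx\|^{2}=(1-\alpha)\|x\|^{2}+\alpha\|Nx\|^{2}.
\]

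Finally I would close the estimate by substituting the contraction bound $\|Nx\|^{2}\le\|x\|^{2}$, which replaces $\alpha\|Nx\|^{2}$ by at most $\alpha\|x\|^{2}$ and collapses the right-hand side to $(1-\alpha)\|x\|^{2}+\alpha\|x\|^{2}=\|x\|^{2}$. There is no serious obstacle here; the only points requiring a moment's care are the cancellation of the cross term, which is exactly what singles out $\tfrac{1-\alpha}{\alpha}$ as the natural weight, and the direction of the inequality $\|Nx\|\le\|x\|$ (secured by the normalization $N(0)=0$). The boundary case $\alpha=1$ is automatic, since then the residual term drops out and the claim reduces to $\|Nx\|\le\|x\|$.
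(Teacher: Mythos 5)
Your proposal is correct and follows essentially the same route as the paper: the convex-combination identity applied with $u=x$, $v=Nx$, the bound $\|Nx\|\le\|x\|$ from nonexpansiveness and $N(0)=0$, and the computation $x-Tx=\alpha(x-Nx)$. Your version is marginally tidier in that you observe the exact cancellation yielding the identity $\|Tx\|^{2}+\tfrac{1-\alpha}{\alpha}\|x-Tx\|^{2}=(1-\alpha)\|x\|^{2}+\alpha\|Nx\|^{2}$ before estimating, and you explicitly dispose of the boundary case $\alpha=1$, which the paper leaves implicit.
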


\begin{proof}
For any $u,v\in H$ and $\alpha\in\left(0,1\right)$ we have the convex
combination identity
\[
\left\Vert \left(1-\alpha\right)u+\alpha v\right\Vert ^{2}=\left(1-\alpha\right)\left\Vert u\right\Vert ^{2}+\alpha\left\Vert v\right\Vert ^{2}-\alpha\left(1-\alpha\right)\left\Vert u-v\right\Vert ^{2}.
\]
Applying this with $u=x$, $v=Nx$ yields 
\[
\left\Vert Tx\right\Vert ^{2}=\left(1-\alpha\right)\left\Vert x\right\Vert ^{2}+\alpha\left\Vert Nx\right\Vert ^{2}-\alpha\left(1-\alpha\right)\left\Vert x-Nx\right\Vert ^{2}.
\]
Since $N$ is nonexpansive, $\left\Vert Nx\right\Vert =\left\Vert Nx-N\left(0\right)\right\Vert \leq\left\Vert x-0\right\Vert =\left\Vert x\right\Vert $,
hence 
\[
\left\Vert Tx\right\Vert ^{2}\leq\left\Vert x\right\Vert ^{2}-\alpha\left(1-\alpha\right)\left\Vert x-Nx\right\Vert ^{2}.
\]
Also, $x-Tx=\alpha\left(x-Nx\right)$, so that 
\[
\left\Vert x-Tx\right\Vert ^{2}=\alpha^{2}\left\Vert x-Nx\right\Vert ^{2}.
\]
Combining these give the stated inequality. 
\end{proof}
\begin{rem}
The parameter $\alpha$ determines the position of $\alpha$-averaged
operators between the identity and the full class of nonexpansive
mappings. The case $\alpha=\tfrac{1}{2}$ corresponds exactly to the
class of firmly nonexpansive mappings. In this case, \prettyref{lem:3-1}
specializes to the inequality
\[
\left\Vert Tx\right\Vert ^{2}+\left\Vert x-Tx\right\Vert ^{2}\leq\left\Vert x\right\Vert ^{2}.
\]
often called the firm Pythagoras inequality. This identity is classical
and underlies the analysis of orthogonal projections and of resolvents
of maximally monotone operators. If $\alpha=1$, then $T=N$ and the
notion of $\alpha$-averaged operator reduces to that of a general
nonexpansive mapping. 
\end{rem}

\begin{defn}
Let $\left(\Omega,\mathcal{F},\mathbb{P}\right)$ be a probability
space. A mapping $T\colon\Omega\times H\to H$ is a random operator
if for each fixed $x\in H$, the section $\omega\mapsto T\left(\omega,x\right)$
is $\mathcal{F}/\mathcal{B}(H)$ measurable, and for $\mathbb{P}$-a.e.
$\omega$, the section $x\mapsto T\left(\omega,x\right)$ belongs
to a prescribed class (e.g., continuous, nonexpansive, $\alpha$-averaged,
etc.). When, in addition, $x\mapsto T\left(\omega,x\right)$ is continuous
for a.e. $\omega$ (Caratheodory condition), then $\left(\omega,x\right)\mapsto T\left(\omega,x\right)$
is jointly measurable.
\end{defn}

In what follows we fix an $\alpha\in\left(0,1\right)$ and assume:
\begin{itemize}
\item[(A1)]  (i.i.d. $\alpha$-averaged maps) There exists a random operator
$T\colon\Omega\times H\to H$ such that for $\mathbb{P}$-a.e. $\omega$,
$T(\omega,\cdot)$ is $\alpha$-averaged. Let $\{T_{k}\}_{k\ge1}$
be i.i.d. copies of $T$, independent across $k$.
\item[(A2)]  (Uniform mean-square coercivity) There exists a constant $C\in\left(0,1\right)$
such that
\[
\mathbb{E}\left[\left\Vert T\left(\omega,u\right)\right\Vert ^{2}\right]\ge C\left\Vert u\right\Vert ^{2}\qquad\text{for all }u\in H,
\]
where the expectation is with respect to $\omega\sim\mathbb{P}$.
Equivalently, for each $u\in H$, $\omega\mapsto\left\Vert T\left(\omega,u\right)\right\Vert ^{2}$
is integrable and the above lower bound holds.
\item[(A3)]  (One-step measurability) For each $x\in H$, the random variables
$T_{k}\left(x\right)$ are square-integrable and $\sigma\left(T_{1},\dots,T_{k-1}\right)$-measurable
conditional expectations in the proofs below are well-defined.
\end{itemize}
\begin{rem}
The endpoint $\alpha=1$ corresponds to $T=N$ nonexpansive. Our convergence
theorem below uses the $\alpha$-averaged inequality in a way that
requires $\alpha<1$; for $\alpha=1$ one needs a different (stronger)
strict-contraction-in-mean assumption (e.g., $\mathbb{E}[\left\Vert N\left(\omega,u\right)\right\Vert ^{2}]\le\left(1-\eta\right)\left\Vert u\right\Vert ^{2}$
for some $\eta>0$) to conclude decay of residuals.
\end{rem}

\textbf{Nonlinear telescoping decomposition}. Fix $x\in H$ and define
recursively, for $n\ge1$,
\[
R_{0}\left(x\right)\coloneqq x,\qquad F_{n}\left(x\right)\coloneqq T_{n}\left(R_{n-1}\left(x\right)\right),\qquad R_{n}\left(x\right)\coloneqq R_{n-1}\left(x\right)-F_{n}\left(x\right).
\]
Then for each $n\ge1$ the algebraic identity
\[
R_{n-1}\left(x\right)=F_{n}\left(x\right)+R_{n}\left(x\right)
\]
holds by definition. Summing these identities for $k=1,\dots,n$ yields
the telescoping decomposition
\begin{equation}
x=\sum\nolimits^{n}_{k=1}F_{k}\left(x\right)+R_{n}\left(x\right),\quad n\ge1.\label{eq:tele}
\end{equation}
This is a purely pointwise identity in $H$ and does not require linearity
of the $T_{k}$.

Set

\[
\rho_{\alpha}\left(C\right)\coloneqq\frac{\alpha}{1-\alpha}\left(1-C\right).
\]
Note that $0<\rho_{\alpha}\left(C\right)<1$ holds automatically for
all $C\in\left(0,1\right)$ if $\alpha\le\tfrac{1}{2}$; for $\alpha\in\left(\tfrac{1}{2},1\right)$
it requires $\tfrac{2\alpha-1}{\alpha}<C<1$.
\begin{thm}[Random iteration for $\alpha$-averaged maps]
\label{thm:main} Assume (A1)-(A3) with $\alpha\in\left(0,1\right)$
and $C\in(0,1)$, and define $\left\{ R_{n}\left(x\right)\right\} _{n\geq0}$,
$\left\{ F_{n}\left(x\right)\right\} _{n\geq1}$ by the nonlinear
recursion above. Suppose moreover that $\rho_{\alpha}\left(C\right)<1$.
Then for every $x\in H$:
\begin{enumerate}
\item \label{enu:1}For each $n\ge0$, 
\[
\mathbb{E}\left[\left\Vert R_{n}\left(x\right)\right\Vert ^{2}\right]\le\rho_{\alpha}\left(C\right)^{n}\left\Vert x\right\Vert ^{2}\quad\left(n\ge0\right).
\]
\item \label{enu:2}$R_{n}\left(x\right)\to0$ almost surely as $n\to\infty$.
\item \label{enu:3}The decomposition
\[
x=\sum\nolimits^{\infty}_{k=1}F_{k}(x)
\]
holds almost surely in the strong topology. Moreover,
\[
\lim_{n\to\infty}\mathbb{E}\left\Vert x-\sum\nolimits^{n}_{k=1}F_{k}(x)\right\Vert ^{2}=0.
\]
\item \label{enu:4}The sequence of partial sums satisfies the inequalities
\[
C\left\Vert x\right\Vert ^{2}\le\lim_{n\to\infty}\mathbb{E}\left[\sum\nolimits^{n}_{k=1}\left\Vert F_{k}\left(x\right)\right\Vert ^{2}\right]\le U_{\alpha}\|x\|^{2}
\]
where 
\[
U_{\alpha}=\begin{cases}
1, & \alpha\le\tfrac{1}{2},\\[1ex]
1+\frac{2\alpha-1}{\alpha}\cdot\dfrac{\rho_{\alpha}\left(C\right)}{1-\rho_{\alpha}\left(C\right)}, & \alpha>\tfrac{1}{2}.
\end{cases}
\]
\end{enumerate}
\end{thm}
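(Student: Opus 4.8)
The plan is to reduce everything to a single one-step estimate obtained by applying \prettyref{lem:3-1} to the operator $T_n$ evaluated at the (random but previously determined) point $R_{n-1}(x)$. Writing $\mathcal{F}_{n-1}=\sigma(T_1,\dots,T_{n-1})$, the vector $R_{n-1}(x)$ is $\mathcal{F}_{n-1}$-measurable while $T_n$ is independent of $\mathcal{F}_{n-1}$ by (A1). Since $F_n(x)=T_n(R_{n-1}(x))$ and $R_n(x)=R_{n-1}(x)-F_n(x)$, the lemma (with the roles $x\mapsto R_{n-1}(x)$, $Tx\mapsto F_n(x)$, $x-Tx\mapsto R_n(x)$) gives the pathwise inequality
\[
\|F_n(x)\|^2+\frac{1-\alpha}{\alpha}\|R_n(x)\|^2\le\|R_{n-1}(x)\|^2 .
\]

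First I would establish the mean-square decay \eqref{enu:1}. Conditioning on $\mathcal{F}_{n-1}$ and using the independence ``freezing'' principle---for $\mathcal{F}_{n-1}$-measurable $u$ and $T_n$ independent of $\mathcal{F}_{n-1}$ one has $\mathbb{E}[\|T_n(u)\|^2\mid\mathcal{F}_{n-1}]=g(u)$ with $g(u)=\mathbb{E}[\|T(\omega,u)\|^2]$---assumption (A2) yields $\mathbb{E}[\|F_n(x)\|^2\mid\mathcal{F}_{n-1}]\ge C\|R_{n-1}(x)\|^2$. Taking conditional expectations in the displayed inequality and rearranging produces $\mathbb{E}[\|R_n(x)\|^2\mid\mathcal{F}_{n-1}]\le\rho_\alpha(C)\|R_{n-1}(x)\|^2$; taking full expectations and iterating from $R_0(x)=x$ gives \eqref{enu:1}. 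Part \eqref{enu:2} then follows at once: since $\rho_\alpha(C)<1$, Tonelli gives $\mathbb{E}\sum_n\|R_n(x)\|^2\le\|x\|^2\sum_n\rho_\alpha(C)^n<\infty$, so $\sum_n\|R_n(x)\|^2<\infty$ and hence $R_n(x)\to0$ almost surely (a Markov/Borel--Cantelli argument gives the same conclusion with an explicit exponential truncation rate). Part \eqref{enu:3} is immediate from the telescoping identity \eqref{eq:tele}: letting $n\to\infty$ and invoking \eqref{enu:2} gives the a.s. strong synthesis $x=\sum_k F_k(x)$, while $\mathbb{E}\|x-\sum_{k=1}^n F_k(x)\|^2=\mathbb{E}\|R_n(x)\|^2\le\rho_\alpha(C)^n\|x\|^2\to0$ gives the mean-square statement.

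For the frame bounds \eqref{enu:4}, the lower bound is the easy direction: since $R_0(x)=x$ is deterministic, (A2) gives $\mathbb{E}[\|F_1(x)\|^2]=\mathbb{E}[\|T_1(x)\|^2]\ge C\|x\|^2$, and as the partial sums $\mathbb{E}[\sum_{k=1}^n\|F_k(x)\|^2]$ are nondecreasing their limit is at least $C\|x\|^2$. For the upper bound I would take expectations in the pathwise inequality to get, with $a_k\coloneqq\mathbb{E}[\|R_k(x)\|^2]$, the estimate $\mathbb{E}[\|F_k(x)\|^2]\le a_{k-1}-\tfrac{1-\alpha}{\alpha}a_k$, and then sum over $k=1,\dots,n$. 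Telescoping rearranges the sum into $a_0+\tfrac{2\alpha-1}{\alpha}\sum_{k=1}^{n-1}a_k-\tfrac{1-\alpha}{\alpha}a_n$. When $\alpha\le\tfrac12$ the coefficient $\tfrac{2\alpha-1}{\alpha}\le0$, so the bound is $\le a_0=\|x\|^2$; when $\alpha>\tfrac12$ it is positive and I would dominate $\sum_{k\ge1}a_k\le\tfrac{\rho_\alpha(C)}{1-\rho_\alpha(C)}\|x\|^2$ via \eqref{enu:1}, yielding exactly $U_\alpha\|x\|^2$. Monotone convergence guarantees the limit exists and equals $\mathbb{E}\sum_k\|F_k(x)\|^2$.

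The main obstacle I expect is the rigorous justification of the conditional-expectation step, i.e. that evaluating the coercivity bound (A2) at the random argument $R_{n-1}(x)$ is legitimate. This is exactly where assumption (A3) and the Carath\'eodory joint-measurability of $T$ enter: one needs $u\mapsto g(u)=\mathbb{E}[\|T(\cdot,u)\|^2]$ to be measurable and the freezing identity $\mathbb{E}[\|T_n(R_{n-1}(x))\|^2\mid\mathcal{F}_{n-1}]=g(R_{n-1}(x))$ to hold $\mathbb{P}$-a.s. Once this independence-substitution lemma is in place, the remaining work is the telescoping bookkeeping and the geometric-series estimates above.
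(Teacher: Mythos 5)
Your proposal is correct and follows essentially the same route as the paper: the pathwise inequality from \prettyref{lem:3-1} at the point $R_{n-1}(x)$, the conditional-expectation/independence step with (A2) giving the one-step contraction by $\rho_\alpha(C)$, the telescoping identity for synthesis, and the identical rearrangement $a_0+\tfrac{2\alpha-1}{\alpha}\sum_{k=1}^{n-1}a_k-\tfrac{1-\alpha}{\alpha}a_n$ with the geometric-series bound for the $\alpha>\tfrac12$ case. The only (cosmetic) deviation is in part \eqref{enu:2}, where you deduce almost-sure convergence from $\mathbb{E}\sum_n\|R_n(x)\|^2<\infty$ via Tonelli rather than the paper's Markov--Borel--Cantelli argument, and you yourself note the equivalence; your closing remark about the freezing identity is exactly the role (A3) plays in the paper.
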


\begin{proof}
Fix $x\in H$. By \prettyref{lem:3-1} applied to the (random) map
$T_{n}\left(\cdot\right)$ at the point $R_{n-1}$,
\begin{equation}
\left\Vert T_{n}\left(R_{n-1}\right)\right\Vert ^{2}+\frac{1-\alpha}{\alpha}\left\Vert R_{n}\right\Vert ^{2}\le\left\Vert R_{n-1}\right\Vert ^{2}\quad\text{a.s.}\label{eq:b3}
\end{equation}

\eqref{enu:1} Taking conditional expectation $\mathbb{E}_{n-1}\left[\cdot\right]\coloneqq\mathbb{E}\left[\cdot\mid T_{1},\dots,T_{n-1}\right]$
in \eqref{eq:b3} and using nonnegativity yields
\[
\frac{1-\alpha}{\alpha}\mathbb{E}_{n-1}\left\Vert R_{n}\right\Vert ^{2}\le\left\Vert R_{n-1}\right\Vert ^{2}-\mathbb{E}_{n-1}\left\Vert T_{n}\left(R_{n-1}\right)\right\Vert ^{2}.
\]
By independence of $T_{n}$ from $\left(T_{1},\dots,T_{n-1}\right)$
and the coercivity (A2),
\[
\mathbb{E}_{n-1}\left\Vert T_{n}\left(R_{n-1}\right)\right\Vert ^{2}\ge C\left\Vert R_{n-1}\right\Vert ^{2}.
\]
Hence,
\[
\mathbb{E}_{n-1}\left\Vert R_{n}\right\Vert ^{2}\le\frac{\alpha}{1-\alpha}\left(1-C\right)\left\Vert R_{n-1}\right\Vert ^{2}=\rho_{\alpha}\left(C\right)\left\Vert R_{n-1}\right\Vert ^{2}.
\]
Taking full expectation and iterating gives

\[
\mathbb{E}\left\Vert R_{n}\right\Vert ^{2}\le\rho_{\alpha}\left(C\right)\mathbb{E}\left\Vert R_{n-1}\right\Vert ^{2}\le\cdots\le\rho_{\alpha}\left(C\right)^{n}\left\Vert x\right\Vert ^{2},
\]
as claimed.

\eqref{enu:2} Fix $\delta>0$. By Markov's inequality and \eqref{enu:1},
\[
\mathbb{P}\left(\left\Vert R_{n}\right\Vert >\delta\right)\le\frac{\mathbb{E}\left\Vert R_{n}\right\Vert ^{2}}{\delta^{2}}\le\frac{\rho_{\alpha}\left(C\right)^{n}}{\delta^{2}}\left\Vert x\right\Vert ^{2}.
\]
Since $\sum_{n}\mathbb{P}\left(\left\Vert R_{n}\right\Vert >\delta\right)<\infty$,
the Borel-Cantelli lemma yields $\left\Vert R_{n}\right\Vert \to0$
a.s.

\eqref{enu:3} The telescoping identity \eqref{eq:tele} holds for
every $n$. By \eqref{enu:2} $R_{n}\to0$ a.s., hence $x=\sum^{\infty}_{k=1}F_{k}\left(x\right)$
a.s. in the strong topology. Moreover, by \eqref{enu:1},
\[
\mathbb{E}\left\Vert x-\sum\nolimits^{n}_{k=1}F_{k}\left(x\right)\right\Vert ^{2}=\mathbb{E}\left\Vert R_{n}\right\Vert ^{2}\xrightarrow[n\to\infty]{}0.
\]

\eqref{enu:4} First, taking expectations in \eqref{eq:b3} and summing
over $n=1,\dots,N$ gives
\begin{align*}
\sum\nolimits^{N}_{n=1}\mathbb{E}\left\Vert F_{n}\right\Vert ^{2}+\frac{1-\alpha}{\alpha}\sum\nolimits^{N}_{n=1}\mathbb{E}\left\Vert R_{n}\right\Vert ^{2} & \le\sum\nolimits^{N}_{n=1}\mathbb{E}\left\Vert R_{n-1}\right\Vert ^{2}\\
 & =\left\Vert x\right\Vert ^{2}+\sum\nolimits^{N-1}_{n=1}\mathbb{E}\left\Vert R_{n}\right\Vert ^{2}.
\end{align*}
Rearranging,
\begin{equation}
\sum\nolimits^{N}_{n=1}\mathbb{E}\left\Vert F_{n}\right\Vert ^{2}+\Big(\frac{1-2\alpha}{\alpha}\Big)\sum\nolimits^{N-1}_{n=1}\mathbb{E}\left\Vert R_{n}\right\Vert ^{2}+\frac{1-\alpha}{\alpha}\mathbb{E}\left\Vert R_{N}\right\Vert ^{2}\le\left\Vert x\right\Vert ^{2}.\label{eq:b4}
\end{equation}

If $\alpha\in\left(0,1/2\right)$, then the second and third terms
on the left of \eqref{eq:b4} are nonnegative. Dropping them gives,
for all $N$,
\[
\sum\nolimits^{N}_{n=1}\mathbb{E}\left\Vert F_{n}\right\Vert ^{2}\le\left\Vert x\right\Vert ^{2}
\]
hence by monotone convergence,
\[
\lim_{N\to\infty}\mathbb{E}\left[\sum\nolimits^{N}_{n=1}\left\Vert F_{n}(x)\right\Vert ^{2}\right]\le\left\Vert x\right\Vert ^{2}.
\]
For the lower bound, note that, by (A2),
\[
\mathbb{E}\left\Vert F_{1}\left(x\right)\right\Vert ^{2}=\mathbb{E}\left\Vert T_{1}(x)\right\Vert ^{2}\ge C\left\Vert x\right\Vert ^{2},
\]
so the nondecreasing limit satisfies
\[
\lim_{N\to\infty}\mathbb{E}\left[\sum\nolimits^{N}_{n=1}\left\Vert F_{n}\left(x\right)\right\Vert ^{2}\right]\ge\mathbb{E}\left\Vert F_{1}\left(x\right)\right\Vert ^{2}\ge C\left\Vert x\right\Vert ^{2}.
\]

The case when $\alpha>1/2$. Now the middle term in the left-hand
side of \eqref{eq:b4} need not be nonnegative and cannot be discarded. 

Using the decay from part \eqref{enu:1}, $\mathbb{E}\left\Vert R_{n}\right\Vert ^{2}\le\rho_{\alpha}\left(C\right)^{n}\left\Vert x\right\Vert ^{2}$
with $\rho_{\alpha}\left(C\right)<1$, we bound the (potentially negative)
middle term:
\[
\left(\frac{1-2\alpha}{\alpha}\right)\sum\nolimits^{N-1}_{n=1}\mathbb{E}\left\Vert R_{n}\right\Vert ^{2}\ge-\left(\frac{2\alpha-1}{\alpha}\right)\frac{\rho_{\alpha}\left(C\right)}{1-\rho_{\alpha}\left(C\right)}\left\Vert x\right\Vert ^{2}.
\]
Hence from \eqref{eq:b4} we obtain, for all $N$,
\[
\sum\nolimits^{N}_{n=1}\mathbb{E}\left\Vert F_{n}\right\Vert ^{2}\le\left\Vert x\right\Vert ^{2}+\frac{2\alpha-1}{\alpha}\frac{\rho_{\alpha}\left(C\right)}{1-\rho_{\alpha}\left(C\right)}\left\Vert x\right\Vert ^{2}.
\]
Letting $N\to\infty$ and using monotone convergence,
\[
\lim_{N\to\infty}\mathbb{E}\left[\sum\nolimits^{N}_{n=1}\left\Vert F_{n}\left(x\right)\right\Vert ^{2}\right]\le\left(1+\frac{2\alpha-1}{\alpha}\frac{\rho_{\alpha}\left(C\right)}{1-\rho_{\alpha}\left(C\right)}\right)\left\Vert x\right\Vert ^{2}.
\]

The same lower bound $\ge C\left\Vert x\right\Vert ^{2}$ holds as
above from the $n=1$ term.
\end{proof}
\begin{cor}[Firmly nonexpansive case $\alpha=\tfrac{1}{2}$]
Assume (A1)-(A3) with $\alpha=\tfrac{1}{2}$ and $C\in\left(0,1\right)$,
and define $\left\{ R_{n}\left(x\right)\right\} _{n\geq0}$ and $\left\{ F_{n}\left(x\right)\right\} _{n\geq1}$
by the nonlinear recursion above. Then for every $x\in H$:
\begin{enumerate}
\item ${\displaystyle \mathbb{E}\left\Vert R_{n}\left(x\right)\right\Vert ^{2}\le\left(1-C\right)^{n}\left\Vert x\right\Vert ^{2}}$,
$n\ge0$.
\item $R_{n}\left(x\right)\to0$ almost surely as $n\to\infty$.
\item ${\displaystyle x=\sum\nolimits^{\infty}_{k=1}F_{k}\left(x\right)}$
almost surely in the strong topology, and
\item ${\displaystyle \lim_{n\to\infty}\mathbb{E}\Big\| x-\sum\nolimits^{n}_{k=1}F_{k}\left(x\right)\Big\|^{2}=0.}$
\item ${\displaystyle C\left\Vert x\right\Vert ^{2}\le\lim_{n\to\infty}\mathbb{E}\Big[\sum\nolimits^{n}_{k=1}\left\Vert F_{k}\left(x\right)\right\Vert ^{2}\Big]\le\left\Vert x\right\Vert ^{2}.}$
\end{enumerate}
\end{cor}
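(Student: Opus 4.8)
The plan is simply to specialize Theorem \ref{thm:main} to $\alpha=\tfrac12$ and to simplify the two structural constants $\rho_\alpha(C)$ and $U_\alpha$. First I would verify that the hypotheses of Theorem \ref{thm:main} are met: (A1)--(A3) are assumed outright, and the only remaining requirement, $\rho_\alpha(C)<1$, is automatic here. Indeed, with $\alpha=\tfrac12$ one computes
\[
\rho_{1/2}(C)=\frac{1/2}{\,1-1/2\,}\,(1-C)=1-C,
\]
which lies in $(0,1)$ for every $C\in(0,1)$. This matches the observation recorded just after the definition of $\rho_\alpha$, namely that $0<\rho_\alpha(C)<1$ holds with no extra constraint on $C$ whenever $\alpha\le\tfrac12$.

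With the hypotheses in force, each conclusion transfers directly. Parts (1)--(4) are immediate upon substituting $\rho_{1/2}(C)^n=(1-C)^n$ into the corresponding statements of Theorem \ref{thm:main}. For part (5), the key point is that the branch $U_\alpha=1$ applies, since $\alpha=\tfrac12\le\tfrac12$; this collapses the two-sided estimate of part \eqref{enu:4} of Theorem \ref{thm:main} to
\[
C\left\Vert x\right\Vert ^{2}\le\lim_{n\to\infty}\mathbb{E}\Big[\sum\nolimits^{n}_{k=1}\left\Vert F_{k}\left(x\right)\right\Vert ^{2}\Big]\le\left\Vert x\right\Vert ^{2},
\]
which is precisely the claimed bound.

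I do not anticipate any genuine obstacle: the corollary is a pure specialization, and the only content is the arithmetic simplification of $\rho_\alpha(C)$ and $U_\alpha$. If one prefers a self-contained derivation, I would instead point out the structural reason that $U_{1/2}=1$ emerges. At $\alpha=\tfrac12$ the firm Pythagoras inequality $\left\Vert T_n(R_{n-1})\right\Vert^2+\left\Vert R_n\right\Vert^2\le\left\Vert R_{n-1}\right\Vert^2$ (the $\alpha=\tfrac12$ case of \prettyref{lem:3-1}) forces the coefficient $\tfrac{1-2\alpha}{\alpha}$ appearing in \eqref{eq:b4} to vanish, so the potentially troublesome middle residual-sum term disappears entirely and the upper bound $\left\Vert x\right\Vert^2$ drops out with no need to invoke the geometric decay from part (1). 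This is exactly the mechanism that makes the $\alpha>\tfrac12$ correction term absent in the present case.
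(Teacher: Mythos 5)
Your proposal is correct and matches the paper's own proof essentially verbatim: the paper likewise applies Theorem \ref{thm:main} with $\alpha=\tfrac12$, notes $\rho_{1/2}(C)=1-C<1$ so the standing hypothesis holds automatically, and observes that $U_{1/2}=1$ yields the upper bound in item (5). Your closing structural remark---that the coefficient $\tfrac{1-2\alpha}{\alpha}$ in \eqref{eq:b4} vanishes at $\alpha=\tfrac12$---is accurate and a nice addition, but not needed beyond what the paper states.
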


\begin{proof}
Apply \prettyref{thm:main} with $\alpha=\tfrac{1}{2}$. Then $\rho_{\alpha}\left(C\right)=\rho_{1/2}\left(C\right)=1-C<1$,
so the standing hypothesis $\rho_{\alpha}\left(C\right)<1$ holds
automatically. In item (4) the upper constant reduces to $U_{1/2}=1$,
giving the upper bound.
\end{proof}
\begin{rem}[Comparison with the linear positive-contraction case]
 When each $T_{k}$ is linear and FNE with $\alpha=\tfrac{1}{2}$
(i.e., a positive contraction), \prettyref{lem:3-1} reduces to the
inequality
\[
\left\Vert Tx\right\Vert ^{2}+\left\Vert x-Tx\right\Vert ^{2}\le\left\Vert x\right\Vert ^{2}
\]
which is the key estimate used in the linear theory \cite{tian2025randomoperatorvaluedframeshilbert}.
The proof above shows that the entire iterative argument carries over
verbatim to the nonlinear $\alpha$-averaged setting once \prettyref{lem:3-1}
and (A2) are in place.
\end{rem}

\begin{thm}[Almost-sure exponential rate]
\label{thm:b10}Assume (A1)-(A3) with $\alpha\in(0,1)$ and $C\in(0,1)$,
and suppose $\rho_{\alpha}(C)<1$. Fix $x\in H$ and set 
\[
\gamma\coloneqq-\tfrac{1}{2}\log\rho_{\alpha}\left(C\right)>0.
\]
Then for every $\ensuremath{\varepsilon\in(0,\gamma)}$ there exists
a finite (random) index $\ensuremath{N=N(\omega,x,\varepsilon)}$
such that
\[
\left\Vert R_{n}\left(x\right)\right\Vert \le e^{-\left(\gamma-\varepsilon\right)n}\left\Vert x\right\Vert \qquad\text{for all }n\ge N,\quad\text{\ensuremath{\mathbb{P}}-a.s.}
\]
Equivalently,
\[
\limsup_{n\to\infty}\frac{1}{n}\log\left\Vert R_{n}\left(x\right)\right\Vert \le-\gamma\qquad\text{\ensuremath{\mathbb{P}}-a.s.}
\]
\end{thm}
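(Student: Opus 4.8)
The plan is to promote the mean-square decay of \prettyref{thm:main}\eqref{enu:1} to an almost-sure rate by a standard Markov-plus-Borel--Cantelli argument. First I would recast the second-moment bound in exponential form: since $\gamma=-\tfrac12\log\rho_{\alpha}(C)$ gives $\rho_{\alpha}(C)=e^{-2\gamma}$, part \eqref{enu:1} reads $\mathbb{E}\left\Vert R_{n}(x)\right\Vert ^{2}\le e^{-2\gamma n}\left\Vert x\right\Vert ^{2}$ for every $n\ge0$. All the strength of the assertion is already contained in the geometric factor $\rho_{\alpha}(C)<1$; what remains is to turn a statement about moments into one about individual trajectories.

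Next, fix $\varepsilon\in(0,\gamma)$ and introduce the exceptional events $A_{n}:=\left\{ \left\Vert R_{n}(x)\right\Vert >e^{-(\gamma-\varepsilon)n}\left\Vert x\right\Vert \right\} $. Applying Markov's inequality to the nonnegative variable $\left\Vert R_{n}(x)\right\Vert ^{2}$ and inserting the bound from the first step gives
\[
\mathbb{P}(A_{n})\le\frac{\mathbb{E}\left\Vert R_{n}(x)\right\Vert ^{2}}{e^{-2(\gamma-\varepsilon)n}\left\Vert x\right\Vert ^{2}}\le\frac{e^{-2\gamma n}}{e^{-2(\gamma-\varepsilon)n}}=e^{-2\varepsilon n}.
\]
Since $\varepsilon>0$, the geometric series $\sum_{n}\mathbb{P}(A_{n})\le\sum_{n}e^{-2\varepsilon n}$ converges, so the first Borel--Cantelli lemma yields $\mathbb{P}(A_{n}\text{ i.o.})=0$. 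Consequently, for $\mathbb{P}$-a.e.\ $\omega$ there is a finite random index $N=N(\omega,x,\varepsilon)$ past which $A_{n}$ fails, i.e.\ $\left\Vert R_{n}(x)\right\Vert \le e^{-(\gamma-\varepsilon)n}\left\Vert x\right\Vert $ for all $n\ge N$. This is precisely the first displayed assertion.

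For the $\limsup$ reformulation I would take logarithms in the inequality just obtained: for $n\ge N$,
\[
\frac{1}{n}\log\left\Vert R_{n}(x)\right\Vert \le-(\gamma-\varepsilon)+\frac{1}{n}\log\left\Vert x\right\Vert ,
\]
and letting $n\to\infty$ annihilates the final term, giving $\limsup_{n}\tfrac{1}{n}\log\left\Vert R_{n}(x)\right\Vert \le-(\gamma-\varepsilon)$ almost surely. There is no genuine analytic difficulty; the only point requiring care is the bookkeeping of the random threshold $N$, which depends on $\varepsilon$. To send $\varepsilon\downarrow0$ I would fix a sequence $\varepsilon_{j}\downarrow0$, obtain the above bound on a full-probability event for each $j$, and intersect these countably many events to retain a single set of probability one on which $\limsup_{n}\tfrac{1}{n}\log\left\Vert R_{n}(x)\right\Vert \le-\gamma$. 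The degenerate cases $x=0$ and $R_{n}(x)=0$ (where the logarithm is $-\infty$) are consistent with both displayed inequalities and need no separate treatment.
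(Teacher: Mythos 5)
Your proposal is correct and follows essentially the same route as the paper: the $L^{2}$ bound $\mathbb{E}\left\Vert R_{n}(x)\right\Vert ^{2}\le\rho_{\alpha}(C)^{n}\left\Vert x\right\Vert ^{2}=e^{-2\gamma n}\left\Vert x\right\Vert ^{2}$ from Theorem \ref{thm:main}\eqref{enu:1}, Markov's inequality giving $\mathbb{P}(A_{n})\le e^{-2\varepsilon n}$, the first Borel--Cantelli lemma, and then $\varepsilon\downarrow0$ for the $\limsup$ claim. Your explicit handling of the countable sequence $\varepsilon_{j}\downarrow0$ with an intersection of full-measure events is a point the paper leaves implicit, but it is the same argument.
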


\begin{proof}
By \prettyref{thm:main} \eqref{enu:1} we have, for every $n\ge0$,
\begin{equation}
\mathbb{E}\left\Vert R_{n}(x)\right\Vert ^{2}\le\rho_{\alpha}\left(C\right)^{n}\left\Vert x\right\Vert ^{2},\label{eq:L2-decay-new}
\end{equation}
with $0<\rho_{\alpha}\left(C\right)<1$ by hypothesis. Fix any $\varepsilon\in\left(0,\gamma\right)$
and define
\[
\theta\coloneqq e^{-\left(\gamma-\varepsilon\right)}\in(\sqrt{\rho_{\alpha}\left(C\right)},1),\qquad A_{n}\coloneqq\left\{ \left\Vert R_{n}\left(x\right)\right\Vert >\theta^{n}\left\Vert x\right\Vert \right\} .
\]
(Note that $\theta>\sqrt{\rho_{\alpha}(C)}$ holds because $\log\theta=-\left(\gamma-\varepsilon\right)=\tfrac{1}{2}\log\rho_{\alpha}\left(C\right)+\varepsilon$.)

Apply Markov’s inequality to the nonnegative random variable $\left\Vert R_{n}\left(x\right)\right\Vert ^{2}$:
\[
\mathbb{P}\left(A_{n}\right)=\mathbb{P}\left(\left\Vert R_{n}(x)\right\Vert ^{2}>\theta^{2n}\left\Vert x\right\Vert ^{2}\right)\le\frac{\mathbb{E}\left\Vert R_{n}(x)\right\Vert ^{2}}{\theta^{2n}\left\Vert x\right\Vert ^{2}}.
\]
Using \eqref{eq:L2-decay-new},
\[
\mathbb{P}\left(A_{n}\right)\le\left(\frac{\rho_{\alpha}\left(C\right)}{\theta^{2}}\right)^{n}.
\]
Since $\rho_{\alpha}\left(C\right)/\theta^{2}\in\left(0,1\right)$,
the geometric series converges:
\[
\sum^{\infty}_{n=0}\mathbb{P}\left(A_{n}\right)\le\sum\nolimits^{\infty}_{n=0}\left(\frac{\rho_{\alpha}\left(C\right)}{\theta^{2}}\right)^{n}<\infty.
\]
By the Borel-Cantelli lemma, $\mathbb{P}\left(A_{n}\ \text{i.o.}\right)=0$.
Equivalently, there exists a finite (random) $N=N(\omega,x,\varepsilon)$
such that $\left\Vert R_{n}(x)\right\Vert \le\theta^{n}\left\Vert x\right\Vert $
for all $n\ge N$, almost surely. Since $\theta=e^{-(\gamma-\varepsilon)}$,
this gives the stated bound.

For the limsup, note that the above holds for every $\varepsilon\in\left(0,\gamma\right)$,
hence
\[
\limsup_{n\to\infty}\frac{1}{n}\log\left\Vert R_{n}\left(x\right)\right\Vert \le\log\theta=-(\gamma-\varepsilon)\quad a.s.
\]
Letting $\varepsilon\downarrow0$ yields the claimed $\limsup\le-\gamma$
almost surely.
\end{proof}
\begin{cor}
Under the hypotheses of \prettyref{thm:b10}, let $\gamma=-\tfrac{1}{2}\log\rho_{\alpha}\left(C\right)>0$.
Then for every $\varepsilon\in\left(0,\gamma\right)$ there exists
a finite (random) $N$ such that
\[
\left\Vert x-\sum\nolimits^{n}_{k=1}F_{k}\left(x\right)\right\Vert =\left\Vert R_{n}\left(x\right)\right\Vert \le e^{-\left(\gamma-\varepsilon\right)n}\left\Vert x\right\Vert ,\;\text{for all }n\ge N,\quad\text{\ensuremath{\mathbb{P}}-a.s.}
\]
In particular,
\[
\limsup_{n\to\infty}\frac{1}{n}\log\left\Vert x-\sum\nolimits^{n}_{k=1}F_{k}\left(x\right)\right\Vert \le-\gamma,\;\text{\ensuremath{\mathbb{P}}-a.s.}
\]
\end{cor}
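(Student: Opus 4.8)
The plan is to recognize this corollary as a direct restatement of \prettyref{thm:b10} after rewriting the synthesis error through the telescoping decomposition \eqref{eq:tele}. The only genuine ingredient is the pointwise algebraic identity already recorded: for every $n\ge 1$ and every $\omega$,
\[
x=\sum\nolimits^{n}_{k=1}F_{k}\left(x\right)+R_{n}\left(x\right),
\]
which rearranges to $x-\sum_{k=1}^{n}F_{k}(x)=R_{n}(x)$. Taking norms gives the equality
\[
\left\Vert x-\sum\nolimits^{n}_{k=1}F_{k}\left(x\right)\right\Vert =\left\Vert R_{n}\left(x\right)\right\Vert
\]
asserted in the statement, and this holds pathwise with no appeal to linearity or measurability.

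With that equality in hand, I would apply \prettyref{thm:b10} verbatim. For the fixed $x$ and any $\varepsilon\in(0,\gamma)$, the theorem produces a finite random index $N=N(\omega,x,\varepsilon)$ such that $\left\Vert R_{n}(x)\right\Vert \le e^{-(\gamma-\varepsilon)n}\left\Vert x\right\Vert$ for all $n\ge N$, $\mathbb{P}$-a.s. Substituting the norm equality from the first step turns this directly into the claimed exponential bound on $\left\Vert x-\sum_{k=1}^{n}F_{k}(x)\right\Vert$. For the limsup conclusion I would again replace $\left\Vert x-\sum_{k=1}^{n}F_{k}(x)\right\Vert$ by $\left\Vert R_{n}(x)\right\Vert$ inside the logarithm and quote the almost-sure bound $\limsup_{n\to\infty}\tfrac{1}{n}\log\left\Vert R_{n}(x)\right\Vert \le-\gamma$ from \prettyref{thm:b10}.

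There is essentially no obstacle here: all the probabilistic work (the Markov/Borel-Cantelli argument yielding the a.s. exponential truncation) has already been carried out in \prettyref{thm:b10}, and the telescoping identity is purely formal. The single point deserving a word of care is the quantifier and null-set bookkeeping: the random index $N$ and the exceptional $\mathbb{P}$-null set are precisely those supplied by \prettyref{thm:b10} for this same $x$ and $\varepsilon$, so no new exceptional set is introduced and the structure ``for every $\varepsilon\in(0,\gamma)$ there exists a finite random $N$'' is inherited unchanged.
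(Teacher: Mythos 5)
Your proposal is correct and matches the paper's own proof exactly: both reduce the corollary to the telescoping identity $x-\sum_{k=1}^{n}F_{k}(x)=R_{n}(x)$ and then invoke \prettyref{thm:b10} verbatim. Your extra remark on inheriting the same null set and random index $N$ from \prettyref{thm:b10} is a welcome bit of precision the paper leaves implicit.
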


\begin{proof}
The telescoping identity yields $x-\sum^{n}_{k=1}F_{k}\left(x\right)=R_{n}\left(x\right)$
for every $n$. Thus the corollary is an immediate restatement of
\prettyref{thm:b10} with $\left\Vert R_{n}\left(x\right)\right\Vert $
replaced by the partial-sum error.
\end{proof}
\begin{rem}[On the $\varepsilon$ in the exponential rate]
 The proof of \prettyref{thm:b10} relies only on the $L^{2}$ decay
$\mathbb{E}\left\Vert R_{n}(x)\right\Vert ^{2}\le\rho_{\alpha}\left(C\right)^{n}\left\Vert x\right\Vert ^{2},$
and on a Markov-Borel-Cantelli argument. This yields an eventual almost-sure
bound with any base $\theta\in(\sqrt{\rho_{\alpha}(C)},1)$, which
translates to
\[
\left\Vert R_{n}\left(x\right)\right\Vert \le e^{-\left(\gamma-\varepsilon\right)n}\left\Vert x\right\Vert \quad\text{for large \ensuremath{n}, a.s.,}
\]
for every $\varepsilon\in\left(0,\gamma\right)$. Hence the conclusion
$\limsup_{n\to\infty}\tfrac{1}{n}\log\left\Vert R_{n}\left(x\right)\right\Vert \le-\gamma$.
To remove the $\varepsilon$ and obtain the exact rate 
\[
\left\Vert R_{n}\left(x\right)\right\Vert \le e^{-\gamma n}\left\Vert x\right\Vert 
\]
eventually a.s., one typically needs stronger probabilistic control,
such as exponential moment bounds for $\left\Vert R_{n}\left(x\right)\right\Vert $,
or concentration inequalities obtained from supermartingale methods
(e.g., Azuma-Hoeffding or Bernstein inequalities). Since such assumptions
are not part of (A1)-(A3), the $\varepsilon$-slack is the sharp conclusion
available under the present hypotheses.
\end{rem}

\section{Examples verifying (A2)}\label{sec:3}

Recall (A2): there exists $C\in(0,1)$ such that
\[
\mathbb{E}\left[\left\Vert T\left(\omega\right)u\right\Vert ^{2}\right]\ge C\left\Vert u\right\Vert ^{2}\quad\text{for all }u\in H.
\]
We present three classes where (A2) holds with an explicit $C$.
\begin{example}[Random orthogonal projections]
 Let $V\left(\omega\right)\subset H$ be a (measurable) random closed
subspace and let $T\left(\omega\right)=P_{V\left(\omega\right)}$
be the orthogonal projection onto $V\left(\omega\right)$. Then
\begin{equation}
\left\Vert P_{V\left(\omega\right)}u\right\Vert ^{2}=\left\langle u,P_{V\left(\omega\right)}u\right\rangle .\label{eq:d1}
\end{equation}
Define the (bounded, selfadjoint, positive) operator
\[
G\coloneqq\mathbb{E}\big[P_{V\left(\omega\right)}\big]\in\mathcal{B}\left(H\right).
\]
By Fubini/Tonelli,
\[
\mathbb{E}\left[\left\Vert T\left(\omega\right)u\right\Vert ^{2}\right]=\mathbb{E}\left[\left\langle u,P_{V\left(\omega\right)}u\right\rangle \right]=\left\langle u,Gu\right\rangle .
\]
Hence (A2) holds provided $G\geq CI$, i.e.,
\[
\left\langle u,Gu\right\rangle \ge C\left\Vert u\right\Vert ^{2}\quad\forall u\in H.
\]
Equivalently, $C=\lambda_{\min}(G)>0$. In finite dimensions this
is the usual ``uniform coverage of directions'' condition; in infinite
dimensions one may assume $G$ has a spectral gap at $0$ on the relevant
closed span.
\end{example}

\begin{example}[Randomized Kaczmarz / random hyperplanes through the origin]
\label{exa:4-2}Let $a\colon\Omega\to H\setminus\{0\}$ be a random
direction and define the orthogonal projection onto the hyperplane
$\left\{ x\in H:\langle a(\omega),x\rangle=0\right\} $ by
\[
P_{\omega}\coloneqq I-\frac{a\left(\omega\right)\otimes a\left(\omega\right)}{\left\Vert a(\omega)\right\Vert ^{2}}.
\]
Set $T\left(\omega\right)=P_{\omega}$. Then, for any $u\in H$,
\[
\left\Vert P_{\omega}u\right\Vert ^{2}=\left\Vert u\right\Vert ^{2}-\frac{\left|\left\langle a\left(\omega\right),u\right\rangle \right|^{2}}{\left\Vert a\left(\omega\right)\right\Vert ^{2}}.
\]
Taking expectations,
\[
\mathbb{E}\left\Vert T\left(\omega\right)u\right\Vert ^{2}=\left\Vert u\right\Vert ^{2}-\left\langle \Sigma u,u\right\rangle ,\quad\Sigma\coloneqq\mathbb{E}\left[\frac{a\left(\omega\right)\otimes a\left(\omega\right)}{\left\Vert a\left(\omega\right)\right\Vert ^{2}}\right].
\]
Therefore,
\[
\mathbb{E}\left\Vert T\left(\omega\right)u\right\Vert ^{2}=\left\langle \left(I-\Sigma\right)u,u\right\rangle ,
\]
and (A2) holds with
\end{example}

\[
C=\lambda_{\min}\left(I-\Sigma\right)=1-\lambda_{\max}\left(\Sigma\right).
\]
Thus it suffices that the normalized second-moment operator $\Sigma$
satisfy $\lambda_{\max}(\Sigma)<1$, which expresses that the random
directions are not concentrated on a strict subspace. In $\mathbb{R}^{d}$,
for instance, if $a/\left\Vert a\right\Vert $ is isotropic then $\Sigma=\tfrac{1}{d}I$
and $C=1-1/d$. 
\begin{example}[Random averaged projections]
 Let $V\left(\omega\right)$ be as in \eqref{eq:d1} and fix $\alpha\in(0,1]$.
Consider the $\alpha$-averaged operator
\[
T(\omega)\coloneqq\left(1-\alpha\right)I+\alpha P_{V\left(\omega\right)}.
\]
Since $P_{V\left(\omega\right)}$ is an orthogonal projection (selfadjoint,
idempotent), for any $u\in H$,
\begin{align*}
\left\Vert T\left(\omega\right)u\right\Vert ^{2} & =\left\Vert \left(1-\alpha\right)u+\alpha P_{V\left(\omega\right)}u\right\Vert ^{2}\\
 & =\left(1-\alpha\right)^{2}\left\Vert u\right\Vert ^{2}+2\alpha\left(1-\alpha\right)\left\langle u,P_{V\left(\omega\right)}u\right\rangle +\alpha^{2}\left\Vert P_{V\left(\omega\right)}u\right\Vert ^{2}\\
 & =\left(1-\alpha\right)^{2}\left\Vert u\right\Vert ^{2}+\left(2\alpha\left(1-\alpha\right)+\alpha^{2}\right)\left\Vert P_{V\left(\omega\right)}u\right\Vert ^{2}\\
 & =\left(1-\alpha\right)^{2}\left\Vert u\right\Vert ^{2}+\left(2\alpha-\alpha^{2}\right)\left\Vert P_{V\left(\omega\right)}u\right\Vert ^{2}.
\end{align*}
Taking expectations and writing $G=\mathbb{E}[P_{V\left(\omega\right)}]$
as before,
\[
\mathbb{E}\left\Vert T\left(\omega\right)u\right\Vert ^{2}=\left(1-\alpha\right)^{2}\left\Vert u\right\Vert ^{2}+\left(2\alpha-\alpha^{2}\right)\left\langle u,Gu\right\rangle .
\]

If $G\geq\gamma I$ with some $\gamma\in(0,1]$, then
\[
\mathbb{E}\left\Vert T\left(\omega\right)u\right\Vert ^{2}\ge\left(\left(1-\alpha\right)^{2}+\left(2\alpha-\alpha^{2}\right)\gamma\right)\left\Vert u\right\Vert ^{2},
\]
so (A2) holds with the explicit constant
\[
C=\left(1-\alpha\right)^{2}+\left(2\alpha-\alpha^{2}\right)\lambda_{\min}\left(G\right).
\]
Note that when $\alpha=1/2$ (firm averaging), 
\[
C=\frac{1}{4}+\frac{3}{4}\lambda_{\min}\left(G\right).
\]
\end{example}

\begin{rem}
In the above examples the verification of (A2) reduces to a linear
spectral condition on the positive operator $G=\mathbb{E}[P_{V(\omega)}]$
(or, equivalently, on $\Sigma$ in \prettyref{exa:4-2}). This is
robust and easy to check in applications.

For more general random averaged operators of the form $T\left(\omega\right)=\left(1-\alpha\right)I+\alpha N\left(\omega\right)$
with $N\left(\omega\right)$ nonexpansive and $N\left(\omega\right)\left(0\right)=0$,
one cannot expect a universal lower bound of $\mathbb{E}\left\Vert T\left(\omega\right)u\right\Vert ^{2}$
without additional structure. The projection-based cases above provide
a broad and practically important class (including randomized coordinate
projections and Kaczmarz-type schemes) where (A2) is guaranteed.

In finite dimensions, if the random subspaces $V\left(\omega\right)$
are drawn from a distribution with a density that is uniformly nondegenerate
on the Grassmannian (so that all directions are covered with positive
probability), then $\lambda_{\min}\left(G\right)>0$ and (A2) holds.
In infinite dimensions, the same conclusion holds on the closed span
of the union $\overline{span}\bigcup_{\omega}V(\omega)$, provided
$G$ has a spectral gap at $0$ on that subspace.
\end{rem}

\section{Random nonlinear fusion frames with exponential sampling rates}\label{sec:4}

In this section, we show how Theorems \ref{thm:main} and \ref{thm:b10}
give rise to a new class of frame-like structures in Hilbert spaces.
Classical frame theory concerns families $\left\{ f_{i}\right\} $
of vectors satisfying the inequality 
\[
A\left\Vert x\right\Vert ^{2}\le\sum\nolimits_{i}\left|\left\langle f_{i},x\right\rangle \right|^{2}\le B\left\Vert x\right\Vert ^{2},
\]
and fusion frame theory extends this to collections of subspaces.
Recall that a family $\left\{ \left(W_{i},v_{i}\right)\right\} _{i\in I}$
of closed subspaces $W_{i}\subset H$ with weights $v_{i}>0$ is a
fusion frame if there exist constants $0<A\le B<\infty$ such that
\[
A\left\Vert x\right\Vert ^{2}\le\sum\nolimits_{i\in I}v^{2}_{i}\left\Vert P_{W_{i}}x\right\Vert ^{2}\le B\left\Vert x\right\Vert ^{2}
\]
for all $x\in H$, where $P_{W_{i}}$ denotes the orthogonal projection
onto $W_{i}$. Equivalently, the fusion frame operator $S\coloneqq\sum_{i\in I}v^{2}_{i}P_{W_{i}}$
is bounded and invertible. The special case $S=I$ is a Parseval fusion
frame.

Our results allow one to define a random nonlinear fusion frame (RNFF),
where the atoms arise nonlinearly and randomly through the iterative
scheme studied in \prettyref{sec:2}. This provides a conceptual application
of the convergence theory developed above.

Let $H$ be a separable Hilbert space, and assume (A1)-(A3) with fixed
$\alpha\in\left(0,1\right)$ and $C\in\left(0,1\right)$. For each
$x\in H$ define recursively
\[
R_{0}\left(x\right)\coloneqq x,\quad F_{n}\left(x\right)\coloneqq T_{n}\left(R_{n-1}\left(x\right)\right),\quad R_{n}\left(x\right)\coloneqq R_{n-1}\left(x\right)-F_{n}\left(x\right).
\]
Thus each atom $F_{n}\left(x\right)$ is obtained by applying a random
operator to the current residual. The sequence $F=\left\{ F_{n}\right\} _{n\geq1}$
depends on $x$ nonlinearly and randomly, yet as we show below, it
provides a complete and stable representation of $x$.
\begin{defn}
We say that $F=\left\{ F_{n}\right\} $ is a random nonlinear fusion
frame (RNFF) for $H$ if for each $x\in H$, 
\begin{enumerate}
\item (Exact synthesis) $x=\sum^{\infty}_{n=1}F_{n}\left(x\right)$ almost
surely in $H$;
\item (Frame bounds in expectation) there exist $0<A\le B<\infty$ such
that
\[
A\left\Vert x\right\Vert ^{2}\leq\lim_{N\to\infty}\mathbb{E}\left[\sum\nolimits^{N}_{n=1}\left\Vert F_{n}(x)\right\Vert ^{2}\right]\le B\left\Vert x\right\Vert ^{2},\quad x\in H.
\]
\end{enumerate}
\end{defn}

\begin{thm}[RNFF from $\alpha$-averaged iterations]
\label{thm:RNFF} Assume (A1)-(A3) with $\alpha\in\left(0,1\right)$
and $C\in\left(0,1\right)$. Let $\rho_{\alpha}\left(C\right)=\frac{\alpha}{1-\alpha}\left(1-C\right)<1$
and $\gamma=-\tfrac{1}{2}\log\rho_{\alpha}\left(C\right)>0$. Then
$F=\{F_{n}\}$ is an RNFF with the following properties:
\begin{enumerate}
\item (Exact synthesis and bounds) Almost surely,
\[
x=\sum^{\infty}_{n=1}F_{n}(x)\;\text{in }H,
\]
and for all $x\in H$,
\[
C\left\Vert x\right\Vert ^{2}\le\lim_{N\to\infty}\mathbb{E}\left[\sum\nolimits^{N}_{n=1}\left\Vert F_{n}\left(x\right)\right\Vert ^{2}\right]\le U_{\alpha}\left\Vert x\right\Vert ^{2},
\]
where
\[
U_{\alpha}=\begin{cases}
1, & \alpha\le\tfrac{1}{2},\\[2mm]
1+\dfrac{2\alpha-1}{\alpha}\cdot\dfrac{\rho_{\alpha}(C)}{1-\rho_{\alpha}(C)}, & \alpha>\tfrac{1}{2}.
\end{cases}
\]
\item (Exponential sampling rate) For every $\varepsilon\in\left(0,\gamma\right)$
and every $x\in H$, there exists a random integer $N\left(\omega,x,\varepsilon\right)$
such that
\begin{equation}
\left\Vert x-\sum\nolimits^{N}_{n=1}F_{n}\left(x\right)\right\Vert =\left\Vert R_{N}\left(x\right)\right\Vert \le e^{-\left(\gamma-\varepsilon\right)N}\left\Vert x\right\Vert \label{eq:d-1}
\end{equation}
for all $N\ge N\left(\omega,x,\varepsilon\right)$, a.s. 
\end{enumerate}
\end{thm}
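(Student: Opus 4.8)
The plan is to recognize that \prettyref{thm:RNFF} is an assembly theorem: every quantitative claim it makes has already been established in \prettyref{thm:main} and \prettyref{thm:b10}, so the proof reduces to checking that the standing hypotheses transfer unchanged and then citing the two earlier results in the correct form. First I would observe that the hypotheses (A1)--(A3) together with $\alpha\in(0,1)$, $C\in(0,1)$, and $\rho_{\alpha}(C)<1$ are exactly the hypotheses of both \prettyref{thm:main} and \prettyref{thm:b10}, and that the sequences $\{R_{n}(x)\}$ and $\{F_{n}(x)\}$ are defined here by the identical recursion; hence no new estimate needs to be derived and the earlier conclusions apply verbatim.

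For part (1), I would invoke item (3) of \prettyref{thm:main} to obtain the almost-sure strong-convergence identity $x=\sum_{n\ge1}F_{n}(x)$, and item (4) to obtain the two-sided energy bound with lower constant $C$ and upper constant $U_{\alpha}$. To conclude that $F$ genuinely satisfies the definition of an RNFF, I would then verify admissibility of these as frame bounds, namely $0<A\le B<\infty$ with $A=C$ and $B=U_{\alpha}$: here $A=C>0$ by assumption, while $U_{\alpha}\ge1$ in both branches of its definition, so $A=C<1\le U_{\alpha}=B$, and finiteness of $B$ follows because the hypothesis $\rho_{\alpha}(C)<1$ keeps the denominator $1-\rho_{\alpha}(C)$ strictly positive in the $\alpha>\tfrac{1}{2}$ branch. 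For part (2), I would apply \prettyref{thm:b10} (equivalently its corollary), using the telescoping identity $x-\sum_{n=1}^{N}F_{n}(x)=R_{N}(x)$ to recast the residual decay as a sampling-error bound; the random index $N(\omega,x,\varepsilon)$ and the rate $\gamma=-\tfrac{1}{2}\log\rho_{\alpha}(C)$ are then inherited directly.

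Since the analytic heart of the matter already lives in the two cited theorems, I do not expect a genuine obstacle; the only points requiring care are bookkeeping. The most delicate is respecting the distinction between the almost-sure statements (exact synthesis and exponential truncation) and the in-expectation statement (the frame bounds): the definition of RNFF mixes an a.s.\ synthesis requirement with an expectation-level energy inequality, so I would be careful to quote each piece from the correct part of the correct theorem rather than conflating the two modes of convergence. A secondary point is to confirm that the lower bound $C\|x\|^{2}$ comes from the single $n=1$ term via (A2), since $\mathbb{E}\|F_{1}(x)\|^{2}=\mathbb{E}\|T_{1}(x)\|^{2}\ge C\|x\|^{2}$, exactly as already noted in the proof of item (4) of \prettyref{thm:main}.
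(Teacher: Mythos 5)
Your proposal is correct and follows exactly the route of the paper's own proof: part (1) is quoted from items (3)--(4) of \prettyref{thm:main}, and part (2) is \prettyref{thm:b10} restated via the telescoping identity $x-\sum_{n=1}^{N}F_{n}(x)=R_{N}(x)$. Your added bookkeeping (admissibility of $A=C$ and $B=U_{\alpha}$, and keeping the a.s.\ and in-expectation statements separate) is sound and, if anything, slightly more careful than the paper's two-line argument.
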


\begin{proof}
Part (1) is an immediate reformulation of \prettyref{thm:main} \eqref{enu:3}-\eqref{enu:4}.
Part (2) is precisely \prettyref{thm:b10}, rephrased in terms of
the synthesis operator $S_{N}\left(x\right)=\sum^{N}_{n=1}F_{n}\left(x\right)$. 
\end{proof}
\begin{cor}[Firmly nonexpansive case]
 If $\alpha=\tfrac{1}{2}$, then $F$ is an expected Parseval RNFF
with lower frame constant $A=C$ and upper constant $B=1$. Moreover,
\[
\left\Vert x-\sum\nolimits^{N}_{n=1}F_{n}\left(x\right)\right\Vert \le\left(1-C\right){}^{N/2+o(N)}\left\Vert x\right\Vert \;\text{a.s.}
\]
\end{cor}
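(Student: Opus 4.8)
The plan is to specialize the already-established results to $\alpha=\tfrac12$; no new estimate is needed, only substitution into \prettyref{thm:RNFF} (equivalently, into \prettyref{thm:main} and \prettyref{thm:b10}) together with a reformulation of the almost-sure rate in multiplicative form. First I would record the arithmetic at $\alpha=\tfrac12$: here $\rho_\alpha(C)=\tfrac{1/2}{1/2}(1-C)=1-C$, which lies in $(0,1)$ for every $C\in(0,1)$, so the standing hypothesis $\rho_\alpha(C)<1$ is satisfied automatically, and $\gamma=-\tfrac12\log\rho_\alpha(C)=-\tfrac12\log(1-C)>0$. In particular the full conclusion of \prettyref{thm:RNFF} applies without any additional condition on $C$.

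For the frame bounds I would invoke \prettyref{thm:RNFF}(1). Since $\alpha=\tfrac12\le\tfrac12$, the upper constant collapses to $U_{1/2}=1$, so the displayed chain there reads $C\|x\|^2\le\lim_{N}\mathbb{E}[\sum_{n=1}^N\|F_n(x)\|^2]\le\|x\|^2$, together with the a.s.\ exact synthesis $x=\sum_{n\ge1}F_n(x)$. This is exactly the definition of an expected RNFF with lower constant $A=C$ and upper constant $B=1$; the sharp upper bound $B=1$ is what we record as the (expected) Parseval property.

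For the exponential sampling rate I would apply the corollary to \prettyref{thm:b10} (equivalently \prettyref{thm:RNFF}(2)) with $\alpha=\tfrac12$: for each $\varepsilon\in(0,\gamma)$ there is a finite random index $N(\omega,x,\varepsilon)$ with $\|x-\sum_{n=1}^N F_n(x)\|=\|R_N(x)\|\le e^{-(\gamma-\varepsilon)N}\|x\|$ for all $N\ge N(\omega,x,\varepsilon)$, $\mathbb{P}$-a.s. Substituting $\gamma=-\tfrac12\log(1-C)$ gives $e^{-\gamma N}=(1-C)^{N/2}$, hence $e^{-(\gamma-\varepsilon)N}\|x\|=(1-C)^{N/2}e^{\varepsilon N}\|x\|$. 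Passing to the limsup form and letting $\varepsilon\downarrow0$ yields $\limsup_{N\to\infty}\tfrac1N\log\|R_N(x)\|\le\tfrac12\log(1-C)$ a.s., which is precisely the compact statement $\|R_N(x)\|\le(1-C)^{N/2+o(N)}\|x\|$ a.s.

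The only point requiring care—rather than a genuine obstacle—is the bookkeeping in the last step: one must confirm that the fixed factor $\|x\|$ and the $\varepsilon$-slack of \prettyref{thm:b10} are exactly what the $o(N)$ in the exponent absorbs. Concretely, since $\log(1-C)<0$ one has $e^{\varepsilon N}=(1-C)^{-\varepsilon N/|\log(1-C)|}$ and $\|x\|=(1-C)^{O(1)}$, so the exponent is $N/2-\varepsilon N/|\log(1-C)|+O(1)=N/2+o(N)$ as $\varepsilon\downarrow0$; consistent with the remark following \prettyref{thm:b10}, the $\varepsilon$ cannot be removed outright but only absorbed. Everything else is direct substitution into results proved above.
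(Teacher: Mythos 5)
Your proposal is correct and follows essentially the same route as the paper, whose entire proof is ``Set $\alpha=\tfrac{1}{2}$ in \eqref{eq:d-1}'': you simply spell out the substitutions $\rho_{1/2}(C)=1-C$, $U_{1/2}=1$, and $\gamma=-\tfrac{1}{2}\log(1-C)$ that the paper leaves implicit. Your extra bookkeeping showing the $\varepsilon$-slack of \prettyref{thm:b10} is exactly what the $o(N)$ in the exponent absorbs is a useful clarification, not a deviation.
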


\begin{proof}
Set $\alpha=1/2$ in \eqref{eq:d-1}.
\end{proof}
\begin{cor}[Random projections]
 Suppose $T\left(\omega\right)=P_{V\left(\omega\right)}$ is the
orthogonal projection onto a random closed subspace $V\left(\omega\right)\subset H$,
and let $G=\mathbb{E}[P_{V\left(\omega\right)}]$. If $\lambda_{\min}(G)=C>0$,
then $\{F_{n}\}$ forms an RNFF with frame bounds $A=C$ and $B$
given by \prettyref{thm:RNFF}. In this case the synthesis process
provides exact reconstruction of $x$ with exponential error decay.
\end{cor}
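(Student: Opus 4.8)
The plan is to recognize this corollary as a direct specialization of \prettyref{thm:RNFF} in the firmly nonexpansive regime $\alpha=\tfrac{1}{2}$, so that the entire content reduces to verifying the standing assumptions (A1)--(A3) for the random family $T(\omega)=P_{V(\omega)}$ and then reading off the constants. First I would record that each orthogonal projection is firmly nonexpansive and fixes the origin, $P_{V(\omega)}(0)=0$, hence is $\tfrac{1}{2}$-averaged (take $N=2P_{V(\omega)}-I$, which is nonexpansive and vanishes at $0$). This yields (A1) with $\alpha=\tfrac{1}{2}$. Measurability of $\omega\mapsto P_{V(\omega)}$ is guaranteed by the hypothesis that $V(\omega)$ is a measurable random closed subspace, so $T$ is a genuine random operator in the sense of the earlier definition and (A3) is in force.

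Next I would verify (A2) by reusing the computation already carried out in the random-projection Example of \prettyref{sec:3}. The pointwise identity $\|P_{V(\omega)}u\|^{2}=\langle u,P_{V(\omega)}u\rangle$ from \eqref{eq:d1}, combined with Tonelli (justified by the nonnegativity and integrability of $\omega\mapsto\|P_{V(\omega)}u\|^{2}\le\|u\|^{2}$), gives $\mathbb{E}\|T(\omega)u\|^{2}=\langle u,Gu\rangle$ with $G=\mathbb{E}[P_{V(\omega)}]$. Hence $\mathbb{E}\|T(\omega)u\|^{2}\ge\lambda_{\min}(G)\|u\|^{2}=C\|u\|^{2}$, so (A2) holds with the stated constant $C=\lambda_{\min}(G)>0$.

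Finally, since $\alpha=\tfrac{1}{2}$ one computes $\rho_{1/2}(C)=\tfrac{\alpha}{1-\alpha}(1-C)=1-C<1$, so the standing hypothesis $\rho_{\alpha}(C)<1$ of \prettyref{thm:RNFF} is automatic, and $U_{1/2}=1$. Applying \prettyref{thm:RNFF} then delivers the RNFF property with lower frame bound $A=C$ and upper bound $B=U_{1/2}=1$, together with almost-sure exact synthesis $x=\sum_{n}F_{n}(x)$ and the exponential error-decay estimate $\|R_{N}(x)\|\le e^{-(\gamma-\varepsilon)N}\|x\|$ for $N$ large, where $\gamma=-\tfrac{1}{2}\log(1-C)$. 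I expect no genuine obstacle here: the only point requiring a moment of care is the integrability needed to apply Tonelli, which is exactly the standing measurability assumption on $V(\omega)$ together with the uniform bound $\|P_{V(\omega)}u\|\le\|u\|$. All substantive analytic work is already contained in \prettyref{thm:main} and \prettyref{thm:b10}, and this corollary merely identifies the parameters in the projection case.
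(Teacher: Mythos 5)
Your proposal is correct and follows exactly the route the paper intends: the paper gives no explicit proof for this corollary, treating it as the combination of the random-orthogonal-projections example in \prettyref{sec:3} (which verifies (A2) with $C=\lambda_{\min}(G)$ via $\|P_{V(\omega)}u\|^{2}=\langle u,P_{V(\omega)}u\rangle$ and Tonelli) with \prettyref{thm:RNFF} at $\alpha=\tfrac{1}{2}$, where $\rho_{1/2}(C)=1-C<1$ automatically and $B=U_{1/2}=1$. Your additional observations --- that $P_{V(\omega)}=\tfrac{1}{2}I+\tfrac{1}{2}(2P_{V(\omega)}-I)$ exhibits the $\tfrac{1}{2}$-averaged structure with $N(0)=0$ as required by \prettyref{lem:3-1}, and that integrability for Tonelli follows from $\|P_{V(\omega)}u\|\le\|u\|$ --- are exactly the details the paper leaves implicit, so there is nothing to correct.
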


\begin{cor}[Randomized Kaczmarz]
 Let $a$ be a random vector in $\mathbb{R}^{d}$ and set 
\[
T\left(x\right)=x-\frac{\langle a,x\rangle}{\left\Vert a\right\Vert ^{2}}a.
\]
Then (A1)-(A3) hold with $C=1-\lambda_{\max}\left(\Sigma\right)$
where 
\[
\Sigma=\mathbb{E}\left[\frac{a\otimes a}{\left\Vert a\right\Vert ^{2}}\right].
\]
Hence $\{F_{n}\}$ forms an RNFF with exponential truncation rate
$\gamma=\tfrac{1}{2}\log\frac{1}{1-C}$. If $a/\|a\|$ is isotropic
in $\mathbb{R}^{d}$, then $C=1-\tfrac{1}{d}$ and 
\[
\left\Vert x-\sum\nolimits^{N}_{n=1}F_{n}\left(x\right)\right\Vert \lesssim\left(\frac{1}{\sqrt{d}}\right)^{N}\left\Vert x\right\Vert \quad a.s.
\]
\end{cor}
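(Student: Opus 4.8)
The plan is to read this corollary as the firmly nonexpansive ($\alpha=\tfrac12$) specialization of the general theory to random orthogonal hyperplane projections, so that every analytic ingredient is already available. First I would observe that
\[
T(\omega)x=x-\frac{\langle a(\omega),x\rangle}{\|a(\omega)\|^2}\,a(\omega)=P_\omega x
\]
is exactly the orthogonal projection onto the hyperplane $\{x:\langle a(\omega),x\rangle=0\}$ of \prettyref{exa:4-2}. An orthogonal projection is firmly nonexpansive, hence $\tfrac12$-averaged, and $P_\omega 0=0$; drawing i.i.d.\ copies of $a$ then gives (A1) with $\alpha=\tfrac12$. Assumption (A3) is immediate from nonexpansivity, since $\|T_k(x)\|\le\|x\|$ makes $\|T_k(x)\|^2$ bounded and therefore square-integrable, while the measurability is part of the random-operator framework.

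Next I would verify (A2) by invoking the computation in \prettyref{exa:4-2}. Starting from $\|P_\omega u\|^2=\|u\|^2-|\langle a(\omega),u\rangle|^2/\|a(\omega)\|^2$ and taking expectations yields $\mathbb{E}\|T(\omega)u\|^2=\langle(I-\Sigma)u,u\rangle$, so (A2) holds with
\[
C=\lambda_{\min}(I-\Sigma)=1-\lambda_{\max}(\Sigma).
\]
Here $\Sigma$ is positive semidefinite with $\operatorname{tr}\Sigma=\mathbb{E}[\|a\|^2/\|a\|^2]=1$, which forces $\lambda_{\max}(\Sigma)\ge 1/d>0$ and hence $C<1$ automatically; the nondegeneracy hypothesis $\lambda_{\max}(\Sigma)<1$ supplies $C>0$, so indeed $C\in(0,1)$.

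With (A1)-(A3) secured and $\alpha=\tfrac12$ I would specialize the geometric parameters: $\rho_{1/2}(C)=1-C$, whence
\[
\gamma=-\tfrac12\log\rho_{1/2}(C)=\tfrac12\log\tfrac{1}{1-C},
\]
exactly the claimed rate. Applying \prettyref{thm:RNFF} (which packages \prettyref{thm:main} and \prettyref{thm:b10}) then delivers that $\{F_n\}$ is an RNFF with frame bounds $A=C$ and $B=U_{1/2}=1$, together with the a.s.\ exponential truncation at rate $\gamma$.

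For the isotropic case I would use that $a/\|a\|$ isotropic means $\Sigma=cI$; taking the trace and using $\operatorname{tr}\Sigma=1$ gives $cd=1$, so $c=1/d$ and $\lambda_{\max}(\Sigma)=1/d$. Hence $C=1-1/d$ and $\gamma=\tfrac12\log d$, so $e^{-\gamma N}=(1/\sqrt d)^N$, and the bound $\|x-\sum_{n=1}^N F_n(x)\|=\|R_N(x)\|\lesssim(1/\sqrt d)^N\|x\|$ a.s.\ follows from \prettyref{thm:b10}. The argument is essentially bookkeeping, with no serious obstacle: the only genuine computation is the trace identity pinning $\Sigma=\tfrac1d I$ under isotropy, and the single point needing care is the interpretation of $\lesssim$, which absorbs the arbitrarily small $\varepsilon$-slack (equivalently the $o(N)$ in the exponent) inherent in the Markov-Borel-Cantelli bound of \prettyref{thm:b10}, rather than an exact eventual rate $e^{-\gamma N}$.
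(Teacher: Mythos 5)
Your proposal is correct and follows essentially the paper's own route: the paper's proof is simply a pointer to \prettyref{exa:4-2} (which gives $\mathbb{E}\|T(\omega)u\|^2=\langle(I-\Sigma)u,u\rangle$ and $C=1-\lambda_{\max}(\Sigma)$, with $\Sigma=\tfrac1d I$ in the isotropic case), combined with the $\alpha=\tfrac12$ specialization of \prettyref{thm:RNFF} and \prettyref{thm:b10}, which is exactly what you carry out in detail. Your added observations --- that $\operatorname{tr}\Sigma=1$ forces $C\le 1-1/d<1$ automatically, and that the symbol $\lesssim$ must be read as absorbing the $\varepsilon$-slack (the $(1-C)^{N/2+o(N)}$ of the preceding corollary) rather than a genuine constant-factor bound --- are accurate refinements consistent with the paper's own remark on the $\varepsilon$ in the exponential rate.
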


\begin{proof}
See \prettyref{exa:4-2}. 
\end{proof}
\begin{rem}
The construction above shows that random $\alpha$-averaged iterations
generate a nonlinear frame-like decomposition with strong quantitative
guarantees. Unlike classical frames, the atoms $F_{n}(x)$ depend
nonlinearly on the vector $x$ through the residual, yet the synthesis
is exact and stable. The single parameter $C$ governs both the frame
lower bound and the exponential sampling complexity. Specializations
recover randomized fusion frames and Kaczmarz methods, but the framework
extends to general firmly nonexpansive or averaged maps. 
\end{rem}

\bibliographystyle{amsalpha}
\bibliography{ref}

\end{document}